\def\qed{\hfill {\hbox{${\vcenter{\vbox{               
   \hrule height 0.4pt\hbox{\vrule width 0.4pt height 6pt
   \kern5pt\vrule width 0.4pt}\hrule height 0.4pt}}}$}}}
\newtheorem*{theorem*}{Theorem}
\newtheorem{theorem}{Theorem}
\newtheorem{definition}{Definition}
\newtheorem*{definition*}{Definition}
\newtheorem{lemma}[theorem]{Lemma}
\newtheorem{example}{Example}
\theoremstyle{plain}
\newcommand{\rst}[1]{\ensuremath{{\mathbin\vert}%
\raise-.5ex\hbox{$#1$}}}
\date{}
\title{\Large \textbf{A Generalization of Lie's Theorem}}
\author{Johanna Hennig} 
\begin{document}


\maketitle

\begin{abstract}
We prove that in a locally finite dimensional Lie algebra $L$, any maximal, locally solvable subalgebra is the stabilizer of a maximal, generalized flag in an integrable, faithful module over $L$.
\end{abstract}

\bigskip

\noindent \textit{ \textbf{Key words:} Locally finite Lie algebra; Generalized flag; Stabilizer; Ultraproduct. }

\bigskip

\noindent \textbf{2010 Mathematics Subject Classification:} 17B65

\section{\large \textbf{Introduction}}

In \cite{DP1}, Dimitrov and Penkov extended the classical Lie's theorem to the infinite dimensional case. Let $V$ be a vector space over an algebraically closed field $k$ of zero characteristic. Let $fgl(V)$ denote the Lie algebra of finite rank linear transformations on $V$. Dimitrov and Penkov proved that every maximal, locally solvable subalgebra of $fgl(V)$ is the stabilizer of a maximal, generalized flag in $V$. Moreover, there is a 1-1 correspondence between maximal, closed generalized flags in $V$ and maximal, locally solvable subalgebras of $fgl(V)$.

\bigskip

\noindent In this paper we use Model Theory to prove the following:

\smallskip

\begin{theorem*}\textup{Let $L$ be a locally finite dimensional Lie algebra over an algebraically closed field $k$ of zero characteristic. Let $V$ be an integrable, faithful module over $L$. Then every maximal, locally solvable subalgebra of $L$ is the stabilizer of a maximal, generalized flag in $V$.}
\end{theorem*}

\smallskip

\noindent In \cite{DP1} it is shown that we cannot expect a 1-1 correspondence between maximal, locally solvable subalgebras and maximal generalized flags in this generality. Moreover, the work of Dan-Cohen in \cite{EDC} shows that we cannot even expect a 1-1 correspondence between maximal, locally solvable subalgebras and maximal, \textit{closed} generalized flags in an arbitrary locally finite Lie algebra.

\section{\large \textbf{Preliminaries}}

\noindent A Lie algebra $L$ is \textit{locally finite dimensional}, or simply \textit{locally finite}, if every finitely generated subalgebra is finite dimensional. An $L$-module $V$ is \textit{integrable} if for every finite subset $\{v_1, \dots , v_n \}$ of $V$ and for every finitely generated subalgebra $L_\alpha$ of $L$, $\{v_1, \dots , v_n \}$ is contained in a finite dimensional $L_\alpha$-submodule. Equivalently, $V$ is an integrable module if and only if the Lie algebra $\Tilde{L} = L + V$ is locally finite.

\bigskip

\noindent In a finite dimensional vector space $V$, a \textit{flag} is a chain of subspaces $(0) \subsetneq V_1 \subsetneq V_2 \subsetneq \dots \subsetneq V_n = V$, where dim $V_i = i$. In an infinite dimensional vector space, this definition will no longer suffice--we will instead use \textit{generalized flags}, which were introduced in \cite{DP2}.

\begin{definition*} 
\textup{Let $V$ be a vector space. A \textit{generalized flag} $\mathbf{F}$ in $V$ is a set of subspaces totally ordered by inclusion such that}
\begin{list}{}{}
\item[i)] \textup{each subspace $S \in \mathbf{F}$ has an immediate predecessor or an immediate successor.}
\item[ii)] \textup{for every nonzero $v \in V$, there is a pair $F'_v, F_v'' \in \mathbf{F}$ such that $F_v''$ is the immediate successor of $F_v'$ and $v \in F_v'' -F_v'$. }
\end{list}
\end{definition*}

\noindent We may write $\mathbf{F} = \{F'_\alpha, F''_\alpha \}_{\alpha \in A}$ where $F'_\alpha$ is the immediate predecessor of $F''_\alpha$ and $A$ is linearly ordered by $\alpha \prec \beta$ if and only if $F'_{\alpha} \subset F'_{\beta}$. A generalized flag $\mathbf{F} = \{F'_\alpha, F''_\alpha \}_{\alpha \in A}$ is \textit{maximal} if and only if dim $F''_\alpha / F'_\alpha = 1$ for all $\alpha \in A$.

\bigskip

\noindent Let $X$ be a set. For the definition and properties of ultrafilters on the set $X$, see \cite{H} or \cite{M}. We will use the following theorem of Malcev, found in \cite{M}:

\begin{theorem*}{(Malcev)}
\textup{Every algebraic system embeds into an ultraproduct of its finitely generated subsystems.}
\end{theorem*}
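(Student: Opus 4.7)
The plan is to construct the embedding explicitly using an ultrafilter on the set of finitely generated subsystems of $A$. Let $\mathcal{I}$ denote the collection of all finitely generated subsystems of the algebraic system $A$, and write $A_i$ for the subsystem indexed by $i \in \mathcal{I}$. The first step is to produce an ultrafilter on $\mathcal{I}$ that concentrates on indices containing any prescribed finite subset of $A$, so that we may use the indices as ``approximations'' to elements of $A$.

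For each finite subset $F \subseteq A$, set $X_F = \{i \in \mathcal{I} : F \subseteq A_i\}$. Since $X_F \cap X_G \supseteq X_{F \cup G}$ and the subsystem generated by the finite set $F \cup G$ lies in $X_{F \cup G}$, the family $\{X_F\}_F$ has the finite intersection property. By Zorn's lemma, extend it to an ultrafilter $\mathcal{U}$ on $\mathcal{I}$, and form the ultraproduct $A^{\ast} = \prod_{i \in \mathcal{I}} A_i / \mathcal{U}$.

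Next, define $\phi : A \to A^{\ast}$ as follows. For each $a \in A$ and each $i \in \mathcal{I}$, put $\tilde a_i = a$ if $a \in A_i$, and otherwise let $\tilde a_i$ be any fixed element of $A_i$; then set $\phi(a) = [(\tilde a_i)_{i \in \mathcal{I}}]$. To verify that $\phi$ is a homomorphism, fix an $n$-ary operation $f$ of the signature and elements $a_1, \dots, a_n \in A$. On the set $X_{\{a_1, \dots, a_n\}} \in \mathcal{U}$, each $A_i$ contains every $a_j$, and hence also $f(a_1, \dots, a_n)$ since subsystems are closed under all operations; thus the representatives of $\phi(f(a_1, \dots, a_n))$ and $f(\phi(a_1), \dots, \phi(a_n))$ agree coordinatewise on an ultrafilter-large set, so they are equal in $A^{\ast}$. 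Injectivity is similar: if $a \neq b$ in $A$, then on $X_{\{a,b\}} \in \mathcal{U}$ one has $\tilde a_i = a \neq b = \tilde b_i$, so $\phi(a) \neq \phi(b)$.

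The argument is not so much blocked by a single hard step as it is a matter of careful bookkeeping: one must check that the construction is uniform across the entire signature, handling constants (nullary operations), operations of every finite arity, and, if the language contains relations, verifying them by the analogous ultrafilter-largeness argument. The only genuinely nonconstructive ingredient is Zorn's lemma, invoked once to extend the filter base $\{X_F\}$ to an ultrafilter. This construction applies to any algebraic system with a set-sized signature, and in particular to Lie algebras and to the extended algebra $\tilde L = L + V$ that will be relevant in the main theorem.
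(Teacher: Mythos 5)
Your proposal is correct and follows essentially the same route as the paper: build a filter base from the sets $X_F$ of indices whose subsystems contain a given finite subset, extend to an ultrafilter, embed via coordinatewise representatives that equal $a$ wherever $a$ lies in the subsystem, and verify the homomorphism and injectivity properties on ultrafilter-large sets. The only cosmetic difference is that you index by the finitely generated subsystems themselves while the paper indexes by nonempty finite subsets of $A$, and you spell out $n$-ary operations where the paper writes a single binary operation; neither affects the argument.
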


\noindent The following lemma gives examples of maximal, generalized flags using an ultrafilter $\mathcal{F}$.

\begin{lemma} \label{flags}
\textup{Let $X$ be a set, and let $\{V_\alpha \}_{\alpha \in X}$ be a family of finite dimensional vector spaces. Let $F_\alpha$ be a flag in $V_\alpha$ and let $\mathcal{F}$ be an ultrafilter on $X$. Then $ \prod F_\alpha / \mathcal{F}$ is a maximal, generalized flag in the vector space $\prod V_\alpha / \mathcal{F}$.}
\end{lemma}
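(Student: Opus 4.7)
The plan is to verify each clause in the definition of a maximal generalized flag for $\prod F_\alpha / \mathcal{F}$ via a standard {\L}o\'s-style transfer: each desired property reduces to a trivial fact about the finite flags $F_\alpha$ on a set belonging to $\mathcal{F}$, and the ultrafilter promotes it to the ultraproduct. First I identify each class $[(W_\alpha)] \in \prod F_\alpha / \mathcal{F}$ with the subspace $\prod W_\alpha / \mathcal{F}$ of $\prod V_\alpha / \mathcal{F}$. Total ordering is then immediate: for any two classes the sets $\{\alpha : W_\alpha \subseteq W'_\alpha\}$ and $\{\alpha : W'_\alpha \subsetneq W_\alpha\}$ partition $X$ (since each $F_\alpha$ is a chain), so exactly one lies in $\mathcal{F}$ and determines the ordering.

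For condition (i), given a class $[(W_\alpha)]$ with $\{\alpha : W_\alpha \neq V_\alpha\} \in \mathcal{F}$, I define $W'_\alpha$ to be the immediate successor of $W_\alpha$ in $F_\alpha$ on that set. Then $[(W'_\alpha)]$ is an immediate successor in the ultraproduct: any strictly intermediate class $[(W''_\alpha)]$ would force $W_\alpha \subsetneq W''_\alpha \subsetneq W'_\alpha$ on a set in $\mathcal{F}$, contradicting immediacy in $F_\alpha$. The top class ($W_\alpha = V_\alpha$ almost everywhere) is handled symmetrically via its immediate predecessor. For condition (ii), given a nonzero $v = [(v_\alpha)]$, the set $\{\alpha : v_\alpha \neq 0\}$ belongs to $\mathcal{F}$; on it let $W''_\alpha$ be the smallest member of $F_\alpha$ containing $v_\alpha$ and $W'_\alpha$ its immediate predecessor. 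The resulting ultraproduct classes yield the required $F''_v, F'_v$ with $v \in F''_v - F'_v$. Maximality is automatic: $\dim_k(W''_\alpha/W'_\alpha) = 1$ for almost every $\alpha$ by construction, so $F''_v/F'_v$ is one-dimensional over the ultrapower field $k^X/\mathcal{F}$ that naturally acts on $\prod V_\alpha / \mathcal{F}$.

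The only technical point requiring real care is the word \emph{immediate} in condition (i): one must rule out intermediate classes in the ultraproduct by reflecting them back to impossible strict refinements of the $F_\alpha$ on a set in $\mathcal{F}$. Everything else is a direct unwinding of definitions together with the defining dichotomy of an ultrafilter.
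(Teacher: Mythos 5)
Your proof is correct and follows essentially the same route as the paper's: identify the classes of $\prod F_\alpha/\mathcal{F}$ with subspaces of $\prod V_\alpha/\mathcal{F}$, use the ultrafilter trichotomy to totally order them, build immediate successors and predecessors pointwise, and locate each nonzero $v$ via the minimal member of $F_\alpha$ containing $v(\alpha)$ on a set of the ultrafilter. Your explicit observation that the successive quotients are one-dimensional over the ultrapower field $k^X/\mathcal{F}$ is in fact slightly more careful than the paper's corresponding step, which writes $(q+1) = q + kw$ with $k$ the ground field.
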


\begin{proof}
For each $\alpha \in X$, $F_\alpha$ is a flag in $V_\alpha$, i.e. a chain of subspaces $(0) \subsetneq V^{\alpha}_1 \subsetneq \dots \subsetneq V^{\alpha}_{n_\alpha}$ where $n_\alpha =$ dim $V_\alpha$. Each element $f \in \prod F_\alpha / \mathcal{F}$ is a function on $X$: for each $\alpha \in X$, $f(\alpha) = V^{\alpha}_i$, a subspace which appears in the flag $F_\alpha$, with the usual equivalence relation given by the ultrafilter. Thus, each element of $\prod F_\alpha / \mathcal{F}$ is a subspace of $\prod V_\alpha / \mathcal{F}$.

\bigskip

\noindent We can also identify each element $f \in \prod F_\alpha / \mathcal{F}$ with a function $q: X \rightarrow \mathbb{Z}$ such that for all $\alpha \in X$, $q(\alpha) \in \{ 0, 1, \dots , n_\alpha \}$, where $q(\alpha) = i \Leftrightarrow f(\alpha) = V^{\alpha}_i$. We shall adopt this viewpoint of elements of $\prod F_\alpha / \mathcal{F}$ for the remainder of the proof.

\bigskip

\noindent We totally order $\prod F_\alpha / \mathcal{F}$ by inclusion: for $p$, $q \in\prod F_\alpha / \mathcal{F}$, we have 

\begin{eqnarray*}
A_1 & = & \{\alpha | p(\alpha) > q(\alpha)\} \\
A_2 & = & \{\alpha | p(\alpha) = q(\alpha)\} \\
A_3 & = & \{\alpha | p(\alpha) < q(\alpha)\} \\
\end{eqnarray*}

\noindent Since $\mathcal{F}$ is an ultrafilter, only one of the $A_i$'s are in $\mathcal{F}$: if $A_1 \in \mathcal{F}$, then we say $p > q$, if $A_2 \in \mathcal{F}$, then $p = q$, if $A_3 \in \mathcal{F}$, then $p < q$. Note that $p \leq q$ if and only if $p$ is a subset of $q$. Hence $\prod F_\alpha / \mathcal{F}$ is a set of subspaces in $\prod V_\alpha / \mathcal{F}$ totally ordered by inclusion. 

\bigskip

\noindent It is clear that the subspace $(0)$, which is identified with the function $q_{min} (\alpha) = 0$ $\forall \alpha$, is minimal with respect to this ordering, and $\prod V_\alpha / \mathcal{F}$, which is identified with the function $q_{max}(\alpha) = n_\alpha$ $\forall \alpha$, is maximal. Every other $q \in \prod F_\alpha / \mathcal{F}$ actually has both an immediate predecessor and successor: for each $q$, we define the successor as $(q+1)(\alpha) = q(\alpha) +1$ for all $\alpha$ such that $q(\alpha) \ne n_\alpha$ and the predecessor as $(q-1)(\alpha) = q(\alpha) -1$ for all $\alpha$ such that $q(\alpha) \ne 0$. Then $(q - 1) < q < (q+1)$ and we claim that dim $(q+1)/q = 1$ for each $q \ne q_{max}$: the set $A = \{ \alpha \in X | q(\alpha) < n_\alpha \}$ must be in $\mathcal{F}$, since $q \ne q_{max}$, so for each $\alpha \in A$ we can choose $w_\alpha \in (q +1)(\alpha) - (q)(\alpha)$ and define $w \in \prod V_\alpha / \mathcal{F}$ by $w(\alpha) = w_\alpha$ for $\alpha \in A$ and $w(\alpha) = 0$ else. Then $w \in (q+1)-q$, and since dim $(q+1)(\alpha) / q(\alpha) = 1$ for $\alpha \in A$ and $\{\alpha \in X | q(\alpha) + kw_\alpha =  (q+1)(\alpha) \} \supset A \in \mathcal{F}$, we have that $(q+1) = q + kw$. Hence dim $(q+1)/q = 1$, which implies that $\prod F_\alpha / \mathcal{F}$ is a maximal chain of subspaces in $\prod V_\alpha / \mathcal{F}$.

\bigskip

\noindent Lastly, let $v \in \prod V_\alpha / \mathcal{F}$ be given such that $v \ne 0$, that is, $A = \{\alpha \in X | v(\alpha) \ne 0 \} \in \mathcal{F}$. Then for each $\alpha \in A$, we can choose some $i_\alpha \in \{0,1, \dots, n_\alpha \}$ such that $v(\alpha) \in V^{\alpha}_{i_{\alpha}} - V^{\alpha}_{i_{\alpha} - 1}$. Define $q_v \in \prod F_\alpha / \mathcal{F}$ by $q_v(\alpha) = i_{\alpha}$ for $\alpha \in A$ and $q_v(\alpha) = 0$ else. Then $v \in (q_v) - (q_v - 1)$. Hence, $\prod F_\alpha / \mathcal{F}$ is a maximal generalized flag in $\prod V_\alpha / \mathcal{F}$. \end{proof}

\bigskip

\noindent A Lie algebra $L$ is \textit{locally solvable} if every finitely generated subalgebra is solvable.

\begin{lemma} \label{stabilizer}
\textup{Suppose $L$ is a locally finite Lie algebra and $V$ is an integrable, faithful module over $L$. If $L$ stabilizes a maximal, generalized flag in $V$, then $L$ is locally solvable.}
\end{lemma}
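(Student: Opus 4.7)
The plan is to show that every finitely generated subalgebra $L_0 \subseteq L$ is solvable. Since $L$ is locally finite, $L_0$ is finite dimensional, and since $V$ is a faithful $L$-module it is also faithful as an $L_0$-module. By integrability, $V$ is the directed union of its finite dimensional $L_0$-submodules $\{W_\lambda\}$.

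First I would locate a single finite dimensional $L_0$-submodule $W \subseteq V$ on which $L_0$ already acts faithfully. For each $\lambda$, the annihilator $K_\lambda = \{x \in L_0 : x \cdot W_\lambda = 0\}$ is a subspace of the finite dimensional space $L_0$. The family $\{K_\lambda\}$ is downward directed (since $W_\lambda \subseteq W_\mu$ forces $K_\mu \subseteq K_\lambda$), and $\bigcap_\lambda K_\lambda = 0$ by faithfulness on $V = \bigcup_\lambda W_\lambda$. Since $L_0$ is finite dimensional, this descending family of subspaces stabilizes at some $\lambda$ with $K_\lambda = 0$; set $W := W_\lambda$, so that $L_0 \hookrightarrow \mathrm{End}(W)$.

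Next I would show that intersecting $\mathbf{F}$ with $W$ yields an $L_0$-stable \textit{complete} flag of $W$. The totally ordered chain $\{W \cap F : F \in \mathbf{F}\}$ has only finitely many distinct terms since $W$ is finite dimensional; list them as $(0) = U_0 \subsetneq U_1 \subsetneq \dots \subsetneq U_m = W$, using the fact that a finite basis of $W$ is covered by finitely many pairs of $\mathbf{F}$, so $W$ sits inside the largest such $F''_\alpha$. Each $U_i$ is $L_0$-stable since $L$ stabilizes every element of $\mathbf{F}$ and $L_0$ preserves $W$. To verify $\dim U_{i+1}/U_i = 1$, pick $w \in U_{i+1} \setminus U_i$ and let $(F'_w, F''_w)$ be the pair in $\mathbf{F}$ with $w \in F''_w \setminus F'_w$. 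The total ordering of $\mathbf{F}$ together with the immediate successor property forces every $F \in \mathbf{F}$ to either lie inside $F'_w$ (in which case $w \notin F$) or to contain $F''_w$ (in which case $w \in F$). Applying this to elements of $\mathbf{F}$ realizing $U_i$ and $U_{i+1}$ pins down $W \cap F'_w = U_i$ and $W \cap F''_w = U_{i+1}$, so $U_{i+1}/U_i$ injects into the one dimensional quotient $F''_w/F'_w$.

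The final step is the converse of Lie's theorem: choosing a basis of $W$ adapted to the flag $U_0 \subsetneq \dots \subsetneq U_m$ realizes $L_0$, via its faithful action on $W$, as a subalgebra of the algebra of upper triangular matrices in $\mathrm{End}(W)$, which is solvable; hence $L_0$ is solvable. The main subtlety lies in the second step, namely verifying that the chain obtained by intersecting $\mathbf{F}$ with $W$ really is a complete flag with one dimensional successive quotients; this is precisely where the maximality hypothesis $\dim F''_\alpha / F'_\alpha = 1$ is used in an essential way.
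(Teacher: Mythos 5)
Your proposal is correct and follows essentially the same route as the paper: reduce to a finite dimensional subalgebra $L_0$, produce a faithful finite dimensional $L_0$-submodule $W$, check that intersecting the maximal generalized flag with $W$ gives an $L_0$-stable complete flag, and conclude solvability from the solvability of upper triangular matrices. The only differences are cosmetic --- you obtain the faithful submodule via a directed family of annihilators of minimal dimension where the paper builds an increasing chain of submodules with strictly shrinking kernels, and you spell out the ``straightforward to check'' flag-restriction step that the paper leaves to the reader.
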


\begin{proof}
Let $X$ be the set of nonempty finite subsets of $L$, and let $L_\alpha$ be the finite dimensional Lie algebra generated by $\alpha \in X$. To show $L$ is locally solvable, it suffices to show that each $L_\alpha$ is a solvable Lie algebra, which we do by constructing a finite dimensional faithful $L_\alpha-$module and applying the classical Lie's theorem.

\bigskip

\noindent Fix $\alpha \in X$, a finite subset of $L$. We claim that we can find a faithful, finite dimensional $L_\alpha$--submodule of $V$. Choose some nonzero $v_1 \in V$, which by assumption is contained in a finite dimensional $L_\alpha$--submodule $V_1$. If $L_\alpha$ acts faithfully on $V_1$, we are done. Otherwise, there is some nonzero $x \in L_\alpha$ such that $x \cdot V_1 = 0$, that is, $x \in K_1$, the kernal of the action of $L_\alpha$ on $V_1$. Since $0 \ne x \in L$ and $L$ acts faithfully on $V$, there is some $v_2 \in V$ such that $x \cdot v_2 \ne 0$. Let $V_2$ be the finite dimensional $L_\alpha$--submodule generated by $V_1$ and $v_2$. If $L_\alpha$ acts faithfully on $V_2$, we are done. If not, then the kernal of the action of $L_\alpha$, denoted $K_2$, is nontrivial. However, since $x \cdot v_2 \ne 0$, we have that $x \cdot V_2 \ne 0$. Thus $x \in K_1 - K_2$ and dim $K_2 <$ dim $K_1 \le$ dim $L_\alpha$. We can repeat this process, creating a larger finite dimensional $L_\alpha$--module with a strictly smaller kernal. Since $L_\alpha$ is finite dimensional, this will eventually terminate with a finite dimensional faithful $L_\alpha$-module, which we call $V_\alpha$.

\bigskip

\noindent Let $\mathbf{F}$ denote the maximal generalized flag stabilized by $L$, and hence also by $L_\alpha$. It is straightforward to check that the set of distinct subspaces from $\{F \cap V_{\alpha} | F \in \mathbf{F}\}$ is a flag in $V_{\alpha}$, which is invariant under $L_{\alpha}$. Since $L_\alpha$ stabilizes a flag in the finite dimensional, faithful module $V_\alpha$, it follows that $L_\alpha$ is solvable.
\end{proof}

\section{\large \textbf{Proof of the Main Theorem}}

We now prove the main result of this paper.

\begin{theorem*}\textup{Let $L$ be a locally finite dimensional Lie algebra over an algebraically closed field $k$ of zero characteristic. Let $V$ be an integrable, faithful module over $L$. Then every maximal, locally solvable subalgebra of $L$ is the stabilizer of a maximal, generalized flag in $V$.}
\end{theorem*}

\begin{proof}
Let $B$ be a maximal, locally solvable subalgebra of $L$, hence $V$ is an integrable module over $B$. Let $X$ denote the set of pairs $\alpha = (\alpha', \alpha'')$, where $\alpha'$ is a nonempty finite subset of $B$ and $\alpha''$ is a nonempty finite subset of $V$. Let $B_\alpha$ denote the finite dimensional subalgebra of $B$ generated by $\alpha'$, and let $V_\alpha$ denote the finite dimensional $B_\alpha$-submodule of $V$ generated by $\alpha''$. 

\bigskip

\noindent Let $X_\alpha = \{ \beta = (\beta', \beta'') \in X | \alpha' \subseteq \beta', \alpha'' \subseteq \beta''\}$. Since finite intersections of elements from the set $\{X_\alpha \}_{\alpha \in X}$ are nonempty,  $\{X_\alpha \}$ embeds into an ultrafilter, $\mathcal{F}$. Thus, by Malcev's theorem, $B$ embeds into the Lie algebra $\Tilde{B} = \prod B_\alpha / \mathcal{F}$, $V$ embeds into the vector space $\Tilde{V} = \prod V_\alpha / \mathcal{F}$, and since $B_\alpha$ acts on $V_\alpha$, we have that $\Tilde{B}$ acts on $\Tilde{V}$ via: $(b \cdot v)(\alpha) = b(\alpha) \cdot v(\alpha)$.

\medskip
\begin{eqnarray*}
B &\overset{\psi}{\hookrightarrow} & \Tilde{B}  = \prod B_\alpha / \mathcal{F}\\
\downarrow & & \downarrow\\
V & \overset{\phi}{\hookrightarrow}& \Tilde{V}  = \prod V_\alpha / \mathcal{F}\\
\end{eqnarray*}
\medskip

\noindent We claim that this embedding respects the action of $B$ on $V$, i.e. $\phi(V)$ is an $\psi(B)$-submodule of $\Tilde{V}$. We need to show for $x \in B$ and $v \in V$, $\phi(x \cdot v) = \psi(x) \cdot \phi(v)$ in $\Tilde{V}$, which is equivalent to the set $A = \{ \alpha \in X | \phi(x \cdot v)(\alpha) = \psi(x)(\alpha) \cdot \phi(v)(\alpha)\} \in \mathcal{F}$. Define $\beta = (\beta', \beta '') \in X$ by $\beta' = \{x\}$ and $\beta'' = \{ v\}$. Then certainly $x \in B_\beta$, $v \in V_\beta$, and $x \cdot v \in V_\beta$, so $\phi(x \cdot v)(\beta) = x \cdot v = \psi(x)(\beta) \cdot \phi(v)(\beta)$. Thus $A \supseteq X_\beta \in \mathcal{F}$, so $A \in \mathcal{F}$ as well. Hence, we can think of $V$ as an $B$-submodule of $\Tilde{V}$.

\bigskip

\noindent Since $B$ is locally solvable, each $B_\alpha$ is a finite dimensional solvable Lie algebra which acts on $V_\alpha$. Hence by Lie's theorem, $B_\alpha$ stabilizes a flag $F_\alpha$ in $V_\alpha$. Since each $F_\alpha$ is stable under $B_\alpha$, it follows that $\Tilde{F} = \prod F_\alpha / \mathcal{F}$ is stable under $\Tilde{B}  = \prod B_\alpha / \mathcal{F}$. By lemma \ref{flags} the ultraproduct $\Tilde{F} = \prod F_\alpha / \mathcal{F}$ is a maximal generalized flag in $\Tilde{V}= \prod V_\alpha / \mathcal{F}$ which is stabilized by $\Tilde{B}  = \prod B_\alpha / \mathcal{F}$, and hence is stabilized by $B$ as well.

\bigskip

\noindent To obtain a maximal generalized flag in $V$, consider the set $\Tilde{F} \cap V = \{q \cap V \}_{q \in \Tilde{F}}$ (Here we are using the copy of $V$ embedded in $\Tilde{V}$ by $\phi$). This is a chain of $B$-submodules of $V$ which may not be a generalized flag--however, we can generate a maximal generalized flag by defining for each $0 \ne v \in V$, $F''_v = q_v \cap V$ and $F'_v = (q_v-1) \cap V$ using the definition for $q_v$ as in the proof of lemma \ref{flags}. Then $\mathbf{F} = \{F'_v, F''_v \}_{v \in V}$ is a generalized flag which is maximal, since dim $F''_v / F'_v = 1$ for each $v \in V$. Since each $q \in \Tilde{F}$ is stabilized by $B$, so is each $q \cap V$ and hence $F''_v$ and $F'_v$ for each $v \in V$. Hence, $\mathbf{F}$ is a maximal generalized flag in $V$ which is stabilized by $B$.

\bigskip

\noindent Thus, we have that $B \subseteq St_\mathbf{F}$, the stabilizer of $\mathbf{F}$ in $L$. By lemma \ref{stabilizer}, $St_\mathbf{F}$ is a locally solvable subalgebra of $L$. Since $B$ is a maximal, locally solvable subalgebra, it must be that $B = St_\mathbf{F}$. Hence, $B$ is the stabilizer of the maximal, generalized flag $\mathbf{F}$.\end{proof}

\bigskip

\section{\large \textbf{Appendix: Ultrafilters and Ultraproducts}}\label{U}

\noindent The results in this section are standard, and we refer the reader to \cite{H} or \cite{M} for more details.

\subsection{\large \textbf{Ultrafilters}}

\begin{definition} 
\textup{A \textit{filter} over a set $X \ne \emptyset$ is a collection of subsets $\mathcal{F}$  such that }
\begin{list}{}{}
\item[i)] $\emptyset \notin \mathcal{F}$
\item[ii)] If $S_1$, $S_2 \in \mathcal{F}$, then $S_1 \cap S_2 \in \mathcal{F}$
\item[iii)] If $S_1 \in \mathcal{F}$ and $S_1 \subset T$, then $T \in \mathcal{F}$
\end{list}
\end{definition}
 
\begin{example} 

\textup{If $X$ is any infinite set, the collection of cofinite sets forms a filter on $X$.}  
\end{example}

\noindent There is a partial ordering on filters: $D_1 \leq D_2$ if $\forall S \in D_1$, $S \in D_2$. An \textit{ultrafilter} is a maximal filter with respect to this ordering. By Zorn's lemma, every filter may be embedded into an ultrafilter.

\begin{lemma} \label{embedding lemma}
\textup{A system $\mathcal{S}$ of subsets of $X$ embeds into a filter (and hence ultrafilter) if and only if all finite intersections of elements from $\mathcal{S}$ are nonempty}. 

\end{lemma}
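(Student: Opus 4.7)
The plan is to prove both directions of the biconditional. For the forward implication, I would assume $\mathcal{S}$ is contained in some filter $\mathcal{F}$ and show that any finite intersection $S_1 \cap \dots \cap S_n$ with $S_i \in \mathcal{S}$ is nonempty. By repeated application of property (ii) of the filter definition, this intersection lies in $\mathcal{F}$, and by property (i) the empty set does not lie in $\mathcal{F}$, so the intersection must be nonempty. This direction is essentially unpacking the definition.

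For the reverse implication, the natural construction is to take the filter \emph{generated} by $\mathcal{S}$, namely
\[
\mathcal{F} = \{ T \subseteq X \mid S_1 \cap \dots \cap S_n \subseteq T \text{ for some } n \geq 1 \text{ and } S_1, \dots, S_n \in \mathcal{S} \}.
\]
I would then verify the three filter axioms in turn. Property (iii) is immediate since $\mathcal{F}$ is defined as an upward-closed family. Property (ii) follows by concatenating the finite intersections: if $T_1 \supseteq S_1 \cap \dots \cap S_n$ and $T_2 \supseteq S'_1 \cap \dots \cap S'_m$, then $T_1 \cap T_2$ contains the $(n+m)$-fold intersection of elements of $\mathcal{S}$. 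Property (i), that $\emptyset \notin \mathcal{F}$, is precisely where the hypothesis is used: if $\emptyset$ belonged to $\mathcal{F}$, then some finite intersection of elements of $\mathcal{S}$ would be contained in $\emptyset$ and hence be empty, contradicting the assumption. Finally, each $S \in \mathcal{S}$ trivially contains itself as a one-element intersection, so $\mathcal{S} \subseteq \mathcal{F}$.

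The statement about ultrafilters (the parenthetical ``and hence ultrafilter'') would follow from the preceding discussion in the appendix: by Zorn's lemma applied to the poset of filters containing $\mathcal{F}$ under the ordering $D_1 \leq D_2$, any filter embeds into a maximal filter, i.e.\ an ultrafilter. The only ingredient needed for Zorn is that a totally ordered union of filters (all containing $\mathcal{F}$) is again a filter, which is routine to check from the axioms. I do not anticipate any real obstacle here; the main point of the lemma is identifying the correct generating construction for $\mathcal{F}$ and observing that the finite intersection property is exactly what prevents $\emptyset$ from being captured.
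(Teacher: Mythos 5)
Your proposal is correct and matches the paper's proof essentially step for step: the same generating construction $\mathcal{F} = \{T \mid S_1 \cap \cdots \cap S_n \subseteq T\}$, the same verification of the three filter axioms (with the finite intersection hypothesis used exactly to exclude $\emptyset$), and the same appeal to the Zorn's lemma fact stated just before the lemma for the ultrafilter parenthetical. No gaps; your treatment of the chain-union step in Zorn's argument is slightly more explicit than the paper's, but the approach is the same.
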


\begin{proof}
\noindent First, suppose that $\mathcal{S}$ is contained in a filter $\mathcal{F}$. Then if $A_1, \dots, A_n$ is a finite collection of elements from $\mathcal{S}$, each of the $A_i$ is in $\mathcal{F}$, and so $A_1 \cap \cdots \cap A_n \in \mathcal{F}$ and hence must be nonempty.

\noindent Now, suppose $\mathcal{S}$ satisfies the finite intersection property, and let 
\begin{eqnarray*}
\mathcal{F} = \{ D \subseteq X | A_1 \cap \cdots \cap A_n \subseteq D, A_1, \dots, A_n \in \mathcal{S}\}
\end{eqnarray*}

\noindent In other words, the set $\mathcal{F}$ is obtained by including all supersets of all intersections of finitely many elements from $\mathcal{S}$. Clearly, $\mathcal{S}$ is contained in such a set. We wish to show $\mathcal{F}$ is a filter. 

\noindent First, $\emptyset \notin \mathcal{F}$ since every finite intersection of elements from $\mathcal{S}$ is nonempty.  If $A$, $B \in \mathcal{F}$, i.e. $A_1 \cap \cdots \cap A_n \subseteq A$ for some $A_i \in \mathcal{S}$, and $B_1 \cap \cdots \cap B_m \subseteq B$ for some $B_j$, then $A_1 \cap \cdots \cap A_n \cap B_1 \cap \cdots \cap B_m \subseteq A \cap B$, and hence $A \cap B \in \mathcal {F}$, so $\mathcal{F}$ is closed under finite intersections. Lastly, if $A \in \mathcal{F}$ and $A \subseteq B$, then $A_1 \cap \cdots \cap A_n \subseteq A$ for some $A_i \in \mathcal{S}$ and so $A_1 \cap \cdots \cap A_n \subseteq A \subseteq B$ as well, implying $B \in \mathcal{F}$ and so $\mathcal{F}$ is closed under supersets and hence forms a filter.
\end{proof}

\begin{lemma} \label{ultrafilter lemma}
\textup{A filter $\mathcal{F}$ on $X$ is an ultrafilter if and only if given any subset $T \subset X$, $T \in \mathcal{F}$ or $X - T \in \mathcal{F}$}. 
\end{lemma}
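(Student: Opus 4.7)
The plan is to prove both implications separately, using Lemma \ref{embedding lemma} (the finite intersection criterion) for the harder direction.

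For the easy direction ($\Leftarrow$), I would assume that for every $T \subseteq X$, either $T \in \mathcal{F}$ or $X - T \in \mathcal{F}$, and show $\mathcal{F}$ is maximal. Suppose $\mathcal{F} \subseteq \mathcal{G}$ for some filter $\mathcal{G}$. Given any $T \in \mathcal{G}$, I would argue that $X - T \notin \mathcal{F}$: if it were, then $X - T \in \mathcal{G}$ as well, forcing $\emptyset = T \cap (X - T) \in \mathcal{G}$, contradicting the definition of a filter. By the assumed dichotomy, $T \in \mathcal{F}$. Hence $\mathcal{G} \subseteq \mathcal{F}$, so $\mathcal{F} = \mathcal{G}$ and $\mathcal{F}$ is an ultrafilter.

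For the harder direction ($\Rightarrow$), I would argue by contradiction. Assume $\mathcal{F}$ is an ultrafilter and there exists some $T \subseteq X$ with $T \notin \mathcal{F}$ and $X - T \notin \mathcal{F}$. The strategy is to enlarge $\mathcal{F}$ by adjoining $T$ and reach a contradiction with maximality. To apply Lemma \ref{embedding lemma} to the system $\mathcal{S} = \mathcal{F} \cup \{T\}$, I need the finite intersection property. Since $\mathcal{F}$ is already closed under finite intersections, it suffices to check that $S \cap T \neq \emptyset$ for every $S \in \mathcal{F}$. If some $S \in \mathcal{F}$ satisfied $S \cap T = \emptyset$, then $S \subseteq X - T$, and by upward closure of $\mathcal{F}$ this would force $X - T \in \mathcal{F}$, contradicting our assumption.

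With the finite intersection property established, Lemma \ref{embedding lemma} produces a filter $\mathcal{G}$ containing $\mathcal{S}$, hence containing $\mathcal{F}$ properly (since $T \in \mathcal{G} \setminus \mathcal{F}$). This contradicts the maximality of $\mathcal{F}$, completing the proof. The main obstacle, such as it is, is the observation that one must rule out \emph{both} $T \in \mathcal{F}$ and $X - T \in \mathcal{F}$ simultaneously in order to extend, and that the extension is done via the finite intersection property criterion rather than directly; the rest is bookkeeping about filter axioms.
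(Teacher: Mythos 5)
Your proof is correct, and its backward direction coincides with the paper's. In the forward direction both arguments rest on Lemma \ref{embedding lemma} together with maximality, but you organize the contradiction differently: assuming $T \notin \mathcal{F}$ and $X - T \notin \mathcal{F}$, you use upward closure (axiom (iii)) to rule out any $S \in \mathcal{F}$ disjoint from $T$ --- such an $S$ would satisfy $S \subseteq X - T$ and force $X - T \in \mathcal{F}$ --- so $\mathcal{F} \cup \{T\}$ has the finite intersection property and extends to a strictly larger filter, contradicting maximality directly. The paper instead applies the extension argument twice, in contrapositive form, to produce witnesses $S_1 \in \mathcal{F}$ disjoint from $T$ and $S_2 \in \mathcal{F}$ disjoint from $X - T$, and then reaches a purely set-theoretic contradiction via $\emptyset \neq S_1 \cap S_2 = (T \cap S_1 \cap S_2) \cup \bigl((X-T) \cap S_1 \cap S_2\bigr) = \emptyset$. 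Your route is more economical: one application of the finite-intersection criterion and no final set computation, at the price of invoking upward closure, which the paper's forward direction never needs. The difference is one of bookkeeping rather than substance; both proofs are elementary and hinge on the same key lemma.
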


\begin{proof}
\noindent First, if $\mathcal{F}$ is a filter with this property, then $\mathcal{F}$ must be maximal: Suppose $\mathcal{F}$ is strictly contained in another filter $\mathcal{F'}$, i.e. there is some subset $T \in \mathcal{F'} - \mathcal{F}$. Since $T$ is not in $\mathcal{F}$, $X - T \in \mathcal{F}$ by assumption and hence $X - T \in \mathcal{F'}$ as well. But then both $T$ and $X-T$ are in $\mathcal{F'}$, implying that their intersection $T \cap (X - T) = \emptyset \in \mathcal{F'}$, a contradiction.

\noindent Now, suppose $\mathcal{F}$ is an ultrafilter and $T \subset X$ such that $T$ is not in $\mathcal{F}$. We wish to show $X - T \in \mathcal{F}$. If for all $S \in \mathcal{F}$, $S \cap T \neq \emptyset$, then by Lemma \ref{embedding lemma} $T$ and $\mathcal{F}$ embed into a strictly larger filter, which contradicts the maximality of $\mathcal{F}$. Hence, we may choose some $S_1 \in \mathcal{F}$ such that $T \cap S_1 = \emptyset$. If $X-T$ is also not in $\mathcal{F}$, then there is some $S_2 \in \mathcal{F}$ such that $(X-T) \cap S_2 = \emptyset$. Since $S_1$ and $S_2$ are in $\mathcal{F}$, $S_1 \cap S_2 \ne \emptyset$. Then we have:

\begin{eqnarray*}
 \emptyset & \ne & S_1 \cap S_2 = X \cap (S_1 \cap S_2)  = (T \cup (X-T)) \cap (S_1 \cap S_2) \\
   && = (T \cap S_1 \cap S_2) \cup (X-T \cap S_1 \cap S_2) = \emptyset\\
\end{eqnarray*}

\noindent This is a contradiction; hence, we must have that $X - T \in \mathcal{F}$. 
\end{proof}

\subsection{\large \textbf{Ultraproducts}}

\noindent Suppose $\{A_\alpha | \alpha \in X\}$ is a collection of algebraic structures of the same type (i.e. each $A_\alpha$ is a group, field, Lie algebra, vector space, etc.) Then we can define the Cartesian product $\prod A_\alpha$ which consists of functions $f: X \rightarrow \prod A_\alpha$ where $f(\alpha) \in A_\alpha$ for all $\alpha \in X$. $\prod A_\alpha$ is the same type of algebraic structure as each of its constituents, where each operation is defined pointwise.

\medskip

\noindent Now let $\mathcal{F}$ be an ultrafilter on $X$. Define an equivalence relation on $\prod A_\alpha$ by $f \sim g$ if $\{ \alpha \in X | f(\alpha) = g(\alpha) \} \in \mathcal{F}$. The \textit{ultraproduct} $\prod A_\alpha / \mathcal{F}$ is the set of equivalence classes, and one may check that this is again the same kind of algebraic structure as each of the $A_\alpha$.  

\begin{example}
\textup{Let $X \subset \mathbb{Z}$ be the set of all primes, and let $\mathcal{F}$ be an ultrafilter on $X$ containing all cofinite sets.  Let $F_p = \mathbb{Z}/p\mathbb{Z}$, and form the ultraproduct $\prod F_p / \mathcal{F}$. This is again a field which, in fact, has characteristic zero}.
\end{example}

\begin{lemma}\label{totally order lemma}
\textup{Let $\mathcal{F}$ be an ultrafilter on $X$. If $A_1, \dots, A_n$ are pairwise disjoint subsets such that $\bigcup A_i \in \mathcal{F}$, then exactly one of the $A_i$ is in $\mathcal{F}$}.
\end{lemma}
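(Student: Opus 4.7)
The plan is to prove the two halves of ``exactly one'' separately, using only the filter axioms together with the characterization of ultrafilters in Lemma \ref{ultrafilter lemma}.

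For the \emph{at most one} direction, I would argue directly from the filter axioms. Suppose for contradiction that two distinct sets $A_i$ and $A_j$ both lie in $\mathcal{F}$. By axiom (ii) of the definition of a filter, their intersection $A_i \cap A_j$ must also lie in $\mathcal{F}$. But by pairwise disjointness, $A_i \cap A_j = \emptyset$, contradicting axiom (i) that $\emptyset \notin \mathcal{F}$.

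For the \emph{at least one} direction, I would use the ultrafilter characterization. Suppose, for contradiction, that none of the $A_i$ belongs to $\mathcal{F}$. By Lemma \ref{ultrafilter lemma}, this forces $X - A_i \in \mathcal{F}$ for every $i = 1, \dots, n$. Applying axiom (ii) of a filter $n-1$ times, the finite intersection $\bigcap_{i=1}^{n} (X - A_i) = X - \bigcup_{i=1}^{n} A_i$ lies in $\mathcal{F}$. On the other hand, $\bigcup_{i=1}^n A_i \in \mathcal{F}$ by hypothesis, so one more application of axiom (ii) yields $\big(\bigcup_i A_i\big) \cap \big(X - \bigcup_i A_i\big) = \emptyset \in \mathcal{F}$, contradicting axiom (i).

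There is really no significant obstacle here: the lemma is a bookkeeping consequence of the two previous lemmas in the appendix, and the only mild subtlety is remembering to invoke the ultrafilter property (not just the filter property) in the second half, since it is what lets us pass from ``$A_i \notin \mathcal{F}$'' to ``$X - A_i \in \mathcal{F}$.'' Disjointness is used only in the uniqueness half; the existence half works for any finite family whose union lies in $\mathcal{F}$.
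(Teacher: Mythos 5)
Your proof is correct and follows essentially the same route as the paper: both halves rest on Lemma \ref{ultrafilter lemma} together with closure of $\mathcal{F}$ under finite intersections, with the paper phrasing the existence step directly (some $X - A_i \notin \mathcal{F}$, hence $A_i \in \mathcal{F}$) where you phrase it as a contradiction ending in $\emptyset \in \mathcal{F}$. Your closing remark that disjointness is needed only for uniqueness is also accurate.
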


\begin{proof}
\noindent We use Lemma \ref{ultrafilter lemma}: since $\bigcup A_i \in \mathcal{F}$, we have $X - \bigcup A_i = \bigcap (X - A_i) \notin \mathcal{F}$, and so $X-A_i \notin \mathcal {F}$ for some $i$ since $\mathcal{F}$ is closed under finite intersections, and so again by Lemma \ref{ultrafilter lemma}, $A_i \in \mathcal{F}$. If more than one $A_i \in \mathcal{F}$, say $A_i$ and $A_j$ are both in $\mathcal{F}$ for $i \ne j$, then since they are disjoint, $\emptyset = A_i \cap A_j \in \mathcal{F}$ which is a contradiction. Hence, one and only one of the $A_i$ is in $\mathcal{F}$.
\end{proof}

\begin{example} \label{total order example}
\textup{Let $\mathcal{F}$ be an ultrafilter on $\mathbb{N}$ containing all cofinite sets, and let $\mathbb{R}^\mathbb{N}$ be the set of sequences in $\mathbb{R}$. Then we can form the ultraproduct $\mathbb{R}^\mathbb{N} / \mathcal{F}$, which contains a copy of $\mathbb{R}$ via the map $ a \in \mathbb{R} \mapsto (a, a, a, \dots) \in \mathbb{R}^\mathbb{N} / \mathcal{F}$. Let $(a_n)$ and $(b_n)$ be two sequences and consider the sets}:
\begin{eqnarray*}
A_1 & = & \{n | a_n > b_n\} \\
A_2 & = & \{n | a_n = b_n\} \\
A_3 & = & \{n | a_n < b_n\} \\
\end{eqnarray*}

\noindent \textup{Since $A_1 \cup A_2 \cup A_3 = \mathbb{N} \in \mathcal{F}$, by Lemma \ref{totally order lemma} we have that exactly one of the $A_i$ is in $\mathcal{F}$ and from this we can define a total ordering on $\mathbb{R}^\mathbb{N} / \mathcal{F}$: if $A_1 \in \mathcal{F}$, we say $(a_n) > (b_n)$, if $A_2 \in \mathcal{F}$, then $(a_n) = (b_n)$, and if $A_3 \in \mathcal{F}$, $(a_n) < (b_n)$.}

\medskip

\noindent \textup{Now consider the element $\epsilon = (1, \frac{1}{2}, \frac{1}{3}, \dots )$, i.e. $\epsilon(n) = \frac{1}{n}$ $\forall n \in \mathbb{N}$. In the ultraproduct, $\epsilon < \frac{1}{n} = (\frac{1}{n}, \frac{1}{n}, \frac{1}{n}, \dots)$ for all $n \in \mathbb{N}$ since $\{n | \epsilon(n) < \frac{1}{n}\}$ is a cofinite set and hence in $\mathcal{F}$; however, unlike in the standard real numbers, $\epsilon \ne 0$.}
\end{example} 

\noindent The next theorem is a less precise wording of a theorem in \cite{M} (theorem 2, section 8.3). We state it in this way to elucidate its meaning and its applications.

\begin{theorem} \label{ultraproduct theorem}
\textup{Every algebraic system embeds into an ultraproduct of its finitely generated subsystems.}
\end{theorem}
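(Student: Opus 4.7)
The plan is to build an ultrafilter on the directed set of finite subsets of $A$ and then describe a ``default-value'' embedding that becomes a genuine homomorphism after passing to the ultraproduct. Let $A$ be the algebraic system in question, let $X$ denote the collection of finite subsets of $A$, and for each $\alpha \in X$ let $A_\alpha$ denote the subsystem of $A$ generated by $\alpha$; by construction each $A_\alpha$ is a finitely generated subsystem.

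First I would set $X_\alpha = \{\beta \in X : \alpha \subseteq \beta\}$ for each $\alpha \in X$ and verify that $\{X_\alpha\}_{\alpha \in X}$ has the finite intersection property: indeed $X_{\alpha_1} \cap \cdots \cap X_{\alpha_n} \supseteq X_{\alpha_1 \cup \cdots \cup \alpha_n}$, which contains $\alpha_1 \cup \cdots \cup \alpha_n$ itself as an element. By Lemma \ref{embedding lemma} the family then extends to an ultrafilter $\mathcal{F}$ on $X$, and in particular $X_\alpha \in \mathcal{F}$ for every $\alpha \in X$.

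Next I would define $\phi : A \to \prod_{\alpha \in X} A_\alpha / \mathcal{F}$ by setting $\phi(a)(\alpha) = a$ whenever $a \in A_\alpha$ and assigning an arbitrary fixed default value in $A_\alpha$ otherwise. Because $X_{\{a\}} \in \mathcal{F}$ and $\phi(a)$ takes the value $a$ on all of $X_{\{a\}}$, the class of $\phi(a)$ in the ultraproduct is independent of the choice of defaults. To verify that $\phi$ is a homomorphism, given an $n$-ary operation $f$ in the signature and $a_1, \ldots, a_n \in A$, I would set $\beta = \{a_1, \ldots, a_n, f(a_1, \ldots, a_n)\}$ and observe that on every $\alpha \in X_\beta$ both $\phi(f(a_1,\ldots,a_n))(\alpha)$ and $f(\phi(a_1)(\alpha),\ldots,\phi(a_n)(\alpha))$ are literally equal to the element $f(a_1,\ldots,a_n)$ of $A_\alpha$; since $X_\beta \in \mathcal{F}$, the two sides agree modulo $\mathcal{F}$. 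Injectivity is similar: if $a \neq b$ then $\phi(a)(\alpha) = a \neq b = \phi(b)(\alpha)$ on all of $X_{\{a,b\}} \in \mathcal{F}$, so $\phi(a) \neq \phi(b)$ in the ultraproduct. Constants and relations (if present in the signature) are handled by the same bookkeeping.

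The only real subtlety is the handling of the default values assigned to $\phi(a)(\alpha)$ when $a \notin A_\alpha$, since these choices are not canonical. The crux of the argument is that for any fixed finite menu of elements and operations one wishes to test at one time, the set of sufficiently large $\alpha$ containing all of them lies in $\mathcal{F}$, so every departure from the ``correct'' value happens only on an $\mathcal{F}$-small set and is washed out in the ultraproduct. Hence the directedness of $X$ and the corresponding ultrafilter do all of the work, and one never needs $A_\alpha$ to be anything more than a receptacle large enough to register the finitely many elements being compared.
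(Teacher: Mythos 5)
Your proposal is correct and follows essentially the same route as the paper's proof: the same index set of finite subsets of $A$, the same family $X_\alpha$ extended to an ultrafilter via the finite intersection property, the same default-value map $\phi(a)(\alpha) = a$ for $a \in A_\alpha$, and the same verification that operations and injectivity are respected on sets of the form $X_\beta \in \mathcal{F}$. Your small refinement of placing $f(a_1,\ldots,a_n)$ into $\beta$ explicitly (rather than appealing to closure of $A_\alpha$ under the operations) changes nothing essential.
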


\begin{proof}
\noindent Let $A$ be an algebraic system, let $X$ be the set of all nonempty finite subsets of $A$, i.e. $X = \{ \alpha \subset A | \alpha \ne \emptyset, | \alpha | < \infty \}$. Clearly, $X$ is an index set for the finitely generated subsystems of $A$. For $\alpha \in X$, define $X_{\alpha} = \{ \beta \in X | \alpha \subseteq \beta\}$, which will be nonempty since $\alpha \in X_{\alpha}$. Then $\{ X_{\alpha}\}_{\alpha \in X}$ is a system of subsets of $X$ such that any finite intersection is nonempty: indeed, for $\alpha, \beta \in X$ we have:
\begin{eqnarray*}
X_\alpha \cap X_\beta = \{ \gamma \in X | \alpha \subseteq \gamma, \beta \subseteq \gamma\} = \{ \gamma \in X | \alpha \cup \beta \subseteq \gamma\} = X_{\alpha \cup \beta} \ne \emptyset
\end{eqnarray*}
Hence, by Lemma \ref{embedding lemma}, we are able to embed $\{ X_{\alpha}\}_{\alpha \in X}$ into an ultrafilter, $\mathcal{F}$. 

\noindent For $\alpha \in X$, let $A_\alpha$ be the subsystem of $A$ generated by $\alpha$ (i.e. the smallest subsystem of $A$ containing the finite set $\alpha$). Let $\Tilde{A}$ denote the ultraproduct $\prod A_\alpha / \mathcal{F}$. We wish to show that $A$ embeds into $\Tilde{A}$. 

\noindent Choose $d \in \Tilde{A}$, i.e. $d: X \rightarrow \prod A_\alpha$ where $d(\alpha) \in A_\alpha$ (the choice of $d$ does not matter). Define the map $\psi: A \rightarrow \Tilde{A}$ by:

\medskip

$\mbox{~~~~~~~~~~} x \in A \mapsto \psi_x \in \Tilde{A} = \prod A_\alpha / \mathcal{F} \mbox{~~~~where~~~~}\psi_x(\alpha) =	\left\{ \begin{array}{rcl} x & \mbox{if} & x \in A_\alpha \\ d(\alpha) & \mbox{if} & x \notin A_\alpha
\end{array}\right.$

\noindent Suppose we have an operation $\ast$ defined on $A$ and hence on each $A_\alpha$ and on the ultraproduct $\Tilde{A}$. Then we show that $\psi(x \ast y) = \psi_{x \ast y} = \psi_x \ast \psi_y = \psi(x) \ast \psi(y)$ in $\Tilde{A}$, i.e. the set $\{ \alpha \in X |  \psi_{x \ast y} (\alpha) = \psi_x (\alpha) \ast \psi_y (\alpha)\}$ is in $\mathcal{F}$.

\begin{eqnarray*}
\{ \alpha \in X |  \psi_{x \ast y} (\alpha) = \psi_x (\alpha) \ast \psi_y (\alpha)\} & \supseteq & \{ \alpha \in X |  \psi_{x \ast y} (\alpha) = x \ast y = \psi_x (\alpha) \ast \psi_y (\alpha)\} \\
& \supseteq & \{ \alpha \in X | \psi_x (\alpha) = x\} \cap \{ \alpha \in X |  \psi_y (\alpha) = y\} \\
& \supseteq & X_{\alpha} \cap X_{\beta} \in \mathcal{F}. 
\end{eqnarray*}

\noindent Hence, since $\mathcal{F}$ is closed under supersets, the desired property holds and we conclude that $\psi$ is a mapping of algebraic structures.

\medskip

\noindent Lastly, we show $\psi$ is injective: suppose to the contrary that $x \ne y$ in $A$ but $\psi_x = \psi_y$ in $\Tilde{A}$, i.e. the set $T = \{ \alpha | \psi_x(\alpha) = \psi_y(\alpha) \} \in \mathcal{F}$. Then by Lemma \ref{ultrafilter lemma}, $X-T = \{ \alpha | \psi_x(\alpha) \ne \psi_y(\alpha) \}$ is not in $\mathcal{F}$. However, 

\begin{eqnarray*}
X-T = \{ \alpha | \psi_x(\alpha) \ne \psi_y(\alpha) \} \supseteq \{ \alpha | \psi_x(\alpha) = x, \psi_y(\alpha) = y \} \supseteq X_{\{x\}} \cap X_{\{y\}} \in \mathcal{F}
\end{eqnarray*}

\noindent (This follows because $x \ne y$ and the sets $\{ x\}$ and $\{y\}$ are certainly finite subsets of $A$.) This is a contradiction--hence, $\psi$ is an injective map, and we can embed $A$ as a subsystem of the ultraproduct $\Tilde{A} = \prod A_\alpha / \mathcal{F}$.
\end{proof}

\bigskip

\textsc{University of California, San Diego, Department of Mathematics, 9500 Gilman Drive,
La Jolla, CA 92093-0112}

\medskip

\noindent \textit{Email address:} \texttt{jhennig@math.ucsd.edu}

\end{document}